\newtheorem{theorem}{Theorem}[section]
\newtheorem{remark}{Remark}
\newtheorem{proposition}{Proposition}[section]
\newtheorem{lemma}{Lemma}[section]
\title[a  clamped plate problem]
{A gap for eigenvalues of a  clamped plate problem}
\author{Daguang Chen,  Qing-Ming Cheng and Guoxin Wei}
\address{Daguang Chen\\
Department of Mathematical Sciences,
Tsinghua University,
Beijing 100084, P. R. China, dgchen@math.tsinghua.edu.cn
}
\address{Qing-Ming Cheng \\ Department of Applied Mathematics, Faculty of Sciences,
Fukuoka  University, 814-0180, Fukuoka,  Japan, cheng@fukuoka-u.ac.jp}
\address{Guoxin Wei \\  School of Mathematical Sciences, South China Normal University,
510631, Guangzhou,  P. R. China, weiguoxin@tsinghua.org.cn}
\begin{document}
\maketitle

\begin{abstract}
This paper  studies eigenvalues of the clamped plate problem on a bounded
domain in an $n$-dimensional Euclidean space.  We give an estimate for
the gap between
$\sqrt {\Gamma_{k+1}-\Gamma_{1}}$ and $\sqrt {\Gamma_{k}-\Gamma_{1}}$, for any  positive integer $k$.
According to  the asymptotic  formula of Agmon and Pleijel,
we know, the gap between
$\sqrt {\Gamma_{k+1}-\Gamma_{1}}$ and $\sqrt {\Gamma_{k}-\Gamma_{1}}$ is bounded by a term
with a lower order  $k^{\frac1n}$ in the sense of the asymptotic formula of Agmon and Peijel,
where $\Gamma_j$ denotes the $j^{^{\text{th}}}$ eigenvalue
of  the clamped plate problem.

\end{abstract}
\footnotetext{{\it Key words and phrases}:
the Dirichlet eigenvalue problem of  Laplacian,  eigenvalues, eigenfunctions,  the  clamped plate problem}
\footnotetext{2010 \textit{Mathematics Subject
Classification}: 35P15, 58C40, 53C42.}

\footnotetext{The first author is supported by NSFC.
The second author is partially  supported by JSPS Grant-in-Aid for Scientific Research (B): No.16H03937.
The third author is supported by NSFC No. 11371150.}

\section{introduction}

\noindent
It is well-known that study on eigenvalues of the eigenvalue problem of  elliptic operators
is a very important subject in geometry and analysis.

\noindent
Let $\Omega$ be a bounded domain with piecewise smooth boundary
in an $n$-dimensional complete Riemannian
manifold $M$.
The following is called {\it the  Dirichlet eigenvalue problem of Laplacian}:
\begin{equation}
\begin{cases}
\Delta u=-\lambda u  &  \text{in $\Omega$}, \\
u=0 & \text{on $\partial \Omega$},
\end{cases}
\end{equation} \\
where $\Delta$ is the Laplacian on $M$.
Many  mathematicians  study universal estimates of  eigenvalues of the Dirichlet  eigenvalue problem of Laplacian.
As main developments for study on universal estimates of eigenvalues,
Payne, P\'olya and Weinberger \cite{PPW}, Hile and Protter \cite{HP},  Yang \cite{Y} makes very important contributions
for bounded domains in Euclidean spaces (see  Ashbaugh \cite{As1, As2, As3}). For domains in sphere, Cheng and Yang  \cite{CY1}
obtains optimal universal estimates on eigenvalues. For bounded domains in complete Riemannian manifolds,
universal estimates on eigenvalues have been obtained by  in
Cheng and Yang  \cite{CY2}, Chen and Cheng \cite{CC}, Chen, Zheng and Yang \cite{CZY}
and El Soufi, Harrell and Ilias \cite{EHI}, Cheng \cite{C1}  and so on. By making use of the universal
estimates on eigenvalues and the recursive inequality of Cheng and Yang \cite{CY4}, Cheng and Yang \cite{CY5} obtain sharp lower
bounds and upper bounds for the $k^{\text{th}}$ eigenvalues of
the Dirichlet  eigenvalue problem of Laplacian in the sense of order of $k$.  For bounded domains in the Euclidean space, by making use of Fourier transform, Li and Yau \cite{LY} gives an optimal lower bound for  the average of the first $k$ eigenvalues of
the Dirichlet  eigenvalue problem of Laplacian.  Recently, in \cite{As3}, Ashbaugh gives a very nice survey for  estimates on eigenvalues of the Dirichlet  eigenvalue problem of Laplacian for  bounded domains in Euclidean space.  For bounded domains in complete Riemannian manifolds, see the very nice book of Urakawa \cite{U}.

\noindent
In this paper,  we consider an eigenvalue problem of the biharmonic operator $\Delta^2$  on a bounded domain
with piecewise smooth boundary in an $n$-dimensional complete Riemannian manifold $M$,
which is also  called {\it the  clamped
plate problem}:
\begin{equation}\label{ccp}
\begin{cases}
\Delta^2 u=\Gamma u  &  \text{in $\Omega$} \\
u=\displaystyle{ \frac{\partial u}{\partial \nu}}=0 & \text{on $\partial \Omega$},
\end{cases}
\end{equation}
where $\Delta^2$ denotes the biharmonic operator on $M$, and $\nu$
is the outward unit normal of $\partial \Omega$.

\noindent
When $\Omega$ is a bounded domain in $\mathbf R^n$,
Agmon and Pleijel
give the following  asymptotic  formula of  eigenvalues of the clamped plate problem (1.2):
$$
\Gamma_k\sim
\dfrac{16\pi^4}{\big(\omega_n\text{vol}(\Omega)\big)^{\frac{4}{n}}}k^{\frac{4}{n}},\
\ \  k\rightarrow\infty.
  $$
This implies that
\begin{equation*}
\frac{1}{k}\sum_{j=1}^k\Gamma_j
\sim\frac{n}{n+4}\dfrac{16\pi^4}{\big(\omega_n\text{vol}(\Omega)\big)^{\frac{4}{n}}}k^{\frac{4}{n}},
\ \ k\rightarrow\infty,
\end{equation*}
where $\Gamma_j$ denotes the $j^{\text{\rm th}}$ eigenvalue of the clamped plate problem \ref{ccp},
 $\text{vol}(\Omega)$ and $\omega_n$ denote  volumes of $\Omega$ and the unit ball in $\mathbf R^n$, respectively.
Furthermore, by making use of the Fourier transform and a lemma due to  H\"ormander,
Levine and Protter  \cite{LP} proves that  eigenvalues of
the clamped plate problem \ref{ccp}  satisfy
$$\dfrac{1}{k}\sum_{j=1}^k\Gamma_j
\geq\frac{n}{n+4}\dfrac{16\pi^4}{\big(\omega_n\text{vol}(\Omega)\big)^{\frac{4}{n}}}k^{\frac{4}{n}}.
$$
The above  formula  shows that the coefficient of $k^{\frac{4}{n}}$ is the best possible constant.
and the order of $k$ is optimal according to the asymptotic formula of Agmon and Peijel.
Cheng and Wei \cite{CW1, CW2}   and Cheng, Qi and Wei \cite{CQW}  generalize the result of Levine and Protter by adding the  lower terms.

\noindent
On the other hand,  it is a very difficult problem to obtain a  sharp estimate for  the upper bound of eigenvalues
with optimal order of $k$
of the clamped plate problem (\ref{ccp}).  For  estimates for upper bounds of eigenvalues
and estimates of two consecutive eigenvalues of the clamped plate problem,
Payne, P\'olya and Weinberger \cite{PPW} proves
\begin{equation}
\Gamma_{k+1} - \Gamma_{k} \leq \frac{8(n+2)}{n^{2} k} \sum_{i=1}^{k} \Gamma_{i}.
\end{equation}
Chen and Qian \cite{CQ} and Hook \cite{H}, independently, extend
 the above inequality to
\begin{equation}
\frac{n^{2} k^{2}}{8(n+2)} \leq
 \sum_{i=1}^{k} \frac{\Gamma_{i}^{\frac{1}{2}}}{\Gamma_{k+1}
 - \Gamma_{i}} \sum_{i=1}^{k} \Gamma_{i}^{\frac{1}{2}}.
\end{equation}
 Cheng and Yang \cite{CY2}  and Wang and Xia \cite{WX} prove
\begin{equation}
\sum_{i=1}^k (\Gamma_{k+1}-\Gamma_{i})^2 \leq \displaystyle{\frac{8(n+2)}{n^2}}\sum_{i=1}^k(\Gamma_{k+1}-\Gamma_{i})\Gamma_{i}
\end{equation}
In the open problem section ( of the 6th International Chinese Congress of Mathematicians,
July 9-14,  2013, Taiwan National University), the second author proposes the following problem:

\vskip2mm
\noindent
{\bf Conjecture 1.1}.  Eigenvalues of the clamped plate problem (\ref{ccp}) for a bounded domain in $\mathbf {R}^n$ satisfies
\begin{equation}
\sum_{i=1}^k (\Gamma_{k+1}-\Gamma_{i})^2 \leq \displaystyle{\frac{8}{n}}\sum_{i=1}^k(\Gamma_{k+1}-\Gamma_{i})\Gamma_{i}.
\end{equation}

\noindent
In fact, if one may prove the conjecture 1.1, by making use of the recursive formula of Cheng and Yang \cite{CY4},
one may obtain the sharp estimates on the upper bound  of the $k^{\text{th}}$ eigenvalue, in the sense of the order of $k$,  of the clamped plate problem.

\noindent
In this paper, we study the gap of two consecutive eigenvalues of the clamped plate problem. We obtain the following:
\begin{theorem}
Let $\Omega$  be a bounded domain in the Euclidean space $ \mathbf {R}^n $. Then, for any integer $k\geq 0$, we have
\begin{equation}
(\sqrt {\Gamma_{k+1}-\Gamma_{1}} -\sqrt {\Gamma_{k}-\Gamma_{1}} )^2
\leq  \dfrac{16\sqrt{\Gamma_1}}{n}\bigl\{(\Gamma_{k+1}-\Gamma_1)(\Gamma_{k}-\Gamma_1)\bigl\}^{\frac14}+C,
\end{equation}
where
$$
C=\max\biggl\{\dfrac{8\int_{\Omega}|\nabla\Delta u_1|^2dv}{(n+2)\|\nabla u_1\|^2 }, \
\dfrac{4(n+12)\Gamma_1+16\int_{\Omega}\sum_{m=1}^n(\dfrac{\partial^2 u_1}{\partial x_m^2})^2dv}{n}\biggl\}.
$$
is constant only depending on the  dimension $n$,  the first eigenvalue $\Gamma_1$ and the normalized first eigenfunction $u_1$.
\end{theorem}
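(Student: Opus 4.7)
The plan is to apply the Rayleigh--Ritz variational principle with trial functions built from coordinate functions multiplied by eigenfunctions, exploiting the full clamped boundary conditions satisfied by the first eigenfunction $u_1$. The key observation, which is what enables $u_1$ to enter the constant $C$, is that since $u_1=\partial u_1/\partial\nu=0$ on $\partial\Omega$, the product $u_1 f$ is admissible for (\ref{ccp}) for any smooth $f$; more generally, $x_{\alpha}u_i$ is an admissible test function for every clamped-plate eigenfunction $u_i$.

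For each $\alpha\in\{1,\dots,n\}$ and each index $i\leq k$, I would form the Gram--Schmidt correction
\begin{equation*}
\varphi_{i\alpha}=x_{\alpha}u_i-\sum_{j=1}^{k}r_{ij}^{\alpha}u_j,\qquad r_{ij}^{\alpha}=\int_{\Omega}x_{\alpha}u_i u_j\,dv,
\end{equation*}
so that $\varphi_{i\alpha}$ is orthogonal to $u_1,\dots,u_k$. Applying the Rayleigh--Ritz characterisation of $\Gamma_{k+1}$ and simplifying the right-hand side using the flat-space commutator $[\Delta^{2},x_{\alpha}]=4\,\partial_{\alpha}\Delta$ together with the elementary identity $\int_{\Omega}\varphi_{i\alpha}x_{\alpha}u_i\,dv=\|\varphi_{i\alpha}\|^{2}$ yields the single-index inequality
\begin{equation*}
(\Gamma_{k+1}-\Gamma_i)\|\varphi_{i\alpha}\|^{2}\leq 4\int_{\Omega}\varphi_{i\alpha}\,\partial_{\alpha}\Delta u_i\,dv,
\end{equation*}
which is the unsummed analogue of (1.5). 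I would specialise to $i=k$, so that the gap emerging on the left is $\Gamma_{k+1}-\Gamma_k$, and then sum over $\alpha$ so that $\sum_{\alpha}x_{\alpha}\partial_{\alpha}$ becomes a radial derivative.

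The heart of the argument is then to recast this inequality in the square-root form claimed by the theorem. I expect to factor $\Gamma_{k+1}-\Gamma_k=(\sqrt{\Gamma_{k+1}-\Gamma_1}-\sqrt{\Gamma_k-\Gamma_1})(\sqrt{\Gamma_{k+1}-\Gamma_1}+\sqrt{\Gamma_k-\Gamma_1})$, and to estimate the right-hand integral by a Cauchy--Schwarz of AM--GM type that allocates half its strength to a factor of $\sqrt{\Gamma_{k+1}-\Gamma_1}$ (coming from the spectral decomposition of $\varphi_{k,\alpha}$ against the eigenbasis, where each component $u_j$ with $j\geq k+1$ satisfies $\Gamma_j-\Gamma_1\geq\Gamma_{k+1}-\Gamma_1$) and the other half to $\sqrt{\Gamma_k-\Gamma_1}$ (coming from the $\partial_{\alpha}\Delta u_k$ factor, since $\|\partial_\alpha\Delta u_k\|^2$ is controlled by $\Gamma_k$ through the commutator). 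The distinguished factor $\sqrt{\Gamma_1}$ on the right should emerge from the identity $\int_{\Omega}|\nabla\Delta u_1|^{2}\,dv=\Gamma_1\|\nabla u_1\|^{2}$, which simultaneously accounts for the first alternative in the definition of $C$; the second alternative will collect the remaining error terms depending only on $u_1$ through quantities such as $\int_{\Omega}\sum_{m}(\partial^{2}u_1/\partial x_m^{2})^{2}\,dv$.

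The principal obstacle I anticipate is engineering the Cauchy--Schwarz splitting so that the numerical coefficient is exactly $16/n$ and the geometric mean appears with the precise exponent $1/4$; a naive splitting will lose a factor of $2$ or produce the wrong power. A secondary technical point is to ensure that the cross-terms $r_{kj}^{\alpha}$ with $2\leq j\leq k$ do not contribute to the leading order, so that the only $u_1$-dependent residue is bounded in $k$ and can be folded into the explicit constant $C$. Handling these two points simultaneously, which likely requires an auxiliary parameter optimisation after the Cauchy--Schwarz step, is where the proof must be pushed most carefully.
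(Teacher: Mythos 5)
Your plan follows the classical Payne--P\'olya--Weinberger route (trial functions $x_\alpha u_i$, the commutator $[\Delta^2,x_\alpha]=4\partial_\alpha\Delta$, Rayleigh--Ritz, Cauchy--Schwarz), but this is not the paper's argument and, more importantly, it cannot produce the right-hand side of the theorem. The paper's proof uses trial functions $g u_1$ with $g=\cos(ax_m)$ and $g=\sin(ax_m)$: oscillating test functions attached \emph{only to the first eigenfunction}, carrying a free frequency parameter $a$. All of the $k$-dependence on the right-hand side enters through the final optimization $a^2\bigl(a^2+2\tfrac{n+2}{n}\|\nabla u_1\|^2\bigr)=\sqrt{(\Gamma_{k+1}-\Gamma_1)(\Gamma_{k+2}-\Gamma_1)}$, which is exactly what generates the fourth root $\{(\Gamma_{k+1}-\Gamma_1)(\Gamma_{k}-\Gamma_1)\}^{1/4}$ and hence the improved order $k^{2/n}$. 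Your linear test function $x_\alpha$ has no tunable parameter, so there is no mechanism in your scheme to produce that term; the single-index inequality $(\Gamma_{k+1}-\Gamma_i)\|\varphi_{i\alpha}\|^2\leq 4\int\varphi_{i\alpha}\partial_\alpha\Delta u_i\,dv$ is precisely the input to the known bounds (1.3)--(1.5), which control $\Gamma_{k+1}-\Gamma_k$ only at order $k^{4/n}$, strictly weaker than the theorem. The second ingredient you are missing is Lemma 2.1 (the Chen--Zheng--Yang algebraic lemma) applied to the tail coefficients $r_j$, $j\geq k+1$: it is this lemma that yields the two-eigenvalue structure $\{(\Gamma_{k+1}-\Gamma_1)+(\Gamma_{k+2}-\Gamma_1)\}\int gu_1p\,dv\leq\|p\|^2+(\Gamma_{k+1}-\Gamma_1)(\Gamma_{k+2}-\Gamma_1)\|gu_1\|^2$, whose rearrangement is literally the square of the difference of square roots. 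Your proposed factorization of $\Gamma_{k+1}-\Gamma_k$ followed by an AM--GM splitting is not an adequate substitute and you yourself flag that you cannot see how to land the constant $16/n$ and the exponent $1/4$; that is because they come from the parameter optimization, not from Cauchy--Schwarz.

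Two further concrete problems. First, taking $i=k$ makes the residual terms depend on $u_k$ (e.g.\ through $\int_\Omega|\nabla\Delta u_k|^2dv$), which grows with $k$ and is not controlled by $\Gamma_k$ alone, whereas the theorem's constant $C$ must depend only on $n$, $\Gamma_1$ and $u_1$; the paper avoids this by never using any eigenfunction other than $u_1$ in the trial functions. Second, the identity you invoke, $\int_\Omega|\nabla\Delta u_1|^2dv=\Gamma_1\|\nabla u_1\|^2$, is false: integration by parts gives $\int_\Omega|\nabla\Delta u_1|^2dv=\Gamma_1\|\nabla u_1\|^2+\oint_{\partial\Omega}\Delta u_1\,\partial_\nu(\Delta u_1)\,ds$, and the boundary term is not controlled by the clamped conditions (which constrain $u_1$ and $\partial_\nu u_1$, not $\Delta u_1$). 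This is exactly why $\int_\Omega|\nabla\Delta u_1|^2dv$ survives as an irreducible quantity inside $C$. In the paper the factor $\sqrt{\Gamma_1}$ in the leading term comes instead from the elementary bound $\|\nabla u_1\|^2=-\int_\Omega u_1\Delta u_1dv\leq\|\Delta u_1\|=\sqrt{\Gamma_1}$, applied after the optimization in $a$.
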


\begin{remark}  According to  the asymptotic  formula of Agmon and Pleijel,
we have
$$
\lim_{k\to \infty}\dfrac{\Gamma_k}{k^{\frac{4}{n}}}=
\dfrac{16\pi^4}{\big(\omega_n\text{\rm vol}(\Omega)\big)^{\frac{4}{n}}}.
  $$
From our theorem,
we know, the gap between
$\sqrt {\Gamma_{k+1}-\Gamma_{1}}$ and $\sqrt {\Gamma_{k}-\Gamma_{1}}$ is bounded by a term with a lower order  $k^{\frac1n}$
in the sense of the asymptotic formula of Agmon and Peijel.

\vskip1mm
\noindent
Since
$$
\Gamma_{k+1}-\Gamma_{k}=
(\sqrt {\Gamma_{k+1}-\Gamma_{1}}-\sqrt {\Gamma_{k}-\Gamma_{1}})
(\sqrt {\Gamma_{k+1}-\Gamma_{1}}+\sqrt {\Gamma_{k}-\Gamma_{1}}),
$$
according to the asymptotic  formula of Agmon and Pleijel, we know that the gap between  $\Gamma_{k+1}$
and $\Gamma_{k}$ is bounded by a term with a lower order  $k^{\frac3n}$.
\end{remark}

\vskip5mm
\section{A general result}

\vskip 6mm
\noindent
Let $\Omega$ be a bounded domain with piecewise smooth boundary in an $n$-dimensional
complete Riemannian manifold $M$.
Let $u_i$ be  an eigenfunction corresponding to  the eigenvalue $\Gamma_{i}$ such that
\begin{equation}
\begin{cases}
\Delta^2{u_i}=\Gamma_i u_i & \text{in $\Omega$} \\
u_i=\displaystyle{\frac{\partial u_i}{\partial\nu}}=0 & \text{on $\partial \Omega$} \\
\displaystyle{\int_\Omega u_i u_jdv}=\delta_{ij},  \  i, j=1, 2, \cdots,
\end{cases}
\end{equation}
where eigenvalues are accounted according to their multiplicities.
Thus, we know that $\{u_j\}_{j=1}^{\infty}$ forms an  orthonormal base of $L^2(\Omega)$-function space.
For any smooth function $g$, we can write
$$
gu_1=\sum_{j=1}^{\infty}r_{j}u_j, \quad  \|gu_1\|^2=\int_{\Omega}(gu_1)^2dv=\sum_{j=1}^{\infty}r_{j}^2,
$$
where $r_{j}=\displaystyle{\int_\Omega g u_1 u_jdv}$, for $j=1, 2, \cdots$.
For any positive integer $k$, we define
\begin{equation}
\varphi:=gu_1-\sum_{j=1}^k r_{j} u_j.
\end{equation}
By a simple calculation, we obtain
\begin{equation}
\displaystyle{\int_\Omega u_j\varphi dv=0},   \ \  j=1,\cdots,k.
\end{equation}
Hence
$$
\|\varphi\|^2=\sum_{j=k+1}^{\infty}r_{j}^2.
$$
Defining
\begin{equation}\label{p}
p=\Delta^2 g \cdot u_1
+2\nabla(\Delta g)\cdot \nabla u_1+2\Delta g\Delta u_1
+2\Delta(\nabla g \cdot \nabla u_1)
+2\nabla g \cdot \nabla(\Delta u_1),
\end{equation}
we have
$$
p=\sum_{j=1}^{\infty}s_{j}u_j, \quad \|p\|^2=\sum_{j=1}^{\infty}s_{j}^2,
$$
where
$$
 s_{j}=\displaystyle{\int_{\Omega}}p u_jdv.
$$
Since
\begin{align*}
& 2\displaystyle{\int_{\Omega}}(\Delta u_j \nabla g \cdot \nabla u_1 - \Delta u_1 \nabla g \cdot \nabla u_j)dv \\
& =(\Gamma_j - \Gamma_1)r_{j} - \displaystyle{\int_{\Omega}}u_1\Delta u_j \Delta gdv
+\displaystyle{\int_{\Omega}}u_j\Delta u_1\Delta gdv,
\end{align*} \\
we can infer
\begin{equation}
s_{j} =( \Gamma_j - \Gamma_1 ) r_{j}.
\end{equation} \\
Thus, we get
$$
\|p\|^2=\sum_{j=1}^{\infty}(\Gamma_j-\Gamma_1)^2r_{j}^2.
$$
$$
\int_{\Omega}gu_1pdv=\int_{\Omega}gu_1\sum_{j=1}^{\infty}s_{j}u_jdv=\sum_{j=1}^{\infty}s_{j}r_{j}
=\sum_{j=1}^{\infty}(\Gamma_{j}-\Gamma_1)r_{j}^2.
$$
From the definition of $\varphi$, we have
$$
\int_{\Omega}\varphi p dv=\int_{\Omega}(gu_1-\sum_{j=1}^k r_{j}u_j)p dv
=\sum_{j=1}^{\infty}(\Gamma_{j}-\Gamma_1)r_{j}^2-\sum_{j=1}^{k}(\Gamma_{j}-\Gamma_1)r_{j}^2.
$$
Hence, we obtain
$$
\int_{\Omega}\varphi pdv=
\sum_{j=k+1}^{\infty}(\Gamma_{j}-\Gamma_1)r_{j}^2.
$$

\noindent
The following  algebraic lemma  plays an important role in this paper,
which  may  be found in  Chen-Yang-Zheng \cite{CZY}, essentially.
For reader's convenient, we give a detailed proof of it in the Appendix.

\begin{lemma}\label{lemma1}
Let $\{\mu_j\}_{j=k+1}^{\infty}$ be a sequence satisfying
$$
0\leq \mu_{k+1}\leq \mu_{k+2}\leq \cdots \to \infty.
$$
If  a sequence $\{a_j\}_{j=k+1}^{\infty}$ satisfies $\sum_{j=k+1}^{\infty}\mu_j^2a_j^2=A<\infty $ and
$\sum_{j=k+1}^{\infty}a_j^2=B<\infty $, then we have
$$
\sum_{j=k+1}^{\infty}\mu_ja_j^2\leq \dfrac{A+\mu_{k+1}\mu_{k+2}B}{\mu_{k+1}+\mu_{k+2}}.
$$
\end{lemma}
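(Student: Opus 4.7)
The plan is to reduce the stated inequality to the manifest non-negativity of a single series. Since the right-hand side has denominator $\mu_{k+1}+\mu_{k+2}>0$ (assuming $\mu_{k+2}>0$; the degenerate case is trivial), multiplying through and substituting $A=\sum_{j\geq k+1}\mu_j^2 a_j^2$ and $B=\sum_{j\geq k+1}a_j^2$ shows that the desired bound
\begin{equation*}
\sum_{j=k+1}^{\infty}\mu_j a_j^2\leq \frac{A+\mu_{k+1}\mu_{k+2}B}{\mu_{k+1}+\mu_{k+2}}
\end{equation*}
is equivalent, after bringing everything to one side, to
\begin{equation*}
\sum_{j=k+1}^{\infty}\bigl[\mu_j^2-(\mu_{k+1}+\mu_{k+2})\mu_j+\mu_{k+1}\mu_{k+2}\bigr]a_j^2\geq 0.
\end{equation*}
The bracketed expression is precisely the quadratic in $\mu_j$ with roots $\mu_{k+1}$ and $\mu_{k+2}$, and therefore factors as $(\mu_j-\mu_{k+1})(\mu_j-\mu_{k+2})$.

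Once this factorization is in hand the verification is immediate from the monotonicity assumption $\mu_{k+1}\leq \mu_{k+2}\leq \mu_{k+3}\leq\cdots$: the term with $j=k+1$ has first factor zero, and for every $j\geq k+2$ we have $\mu_j\geq \mu_{k+2}\geq \mu_{k+1}$, so both factors are non-negative and the $j$-th summand is $\geq 0$. Summing gives the claim.

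I should also remark briefly on the convergence of the series being manipulated. By Cauchy--Schwarz, $\sum_{j\geq k+1}\mu_j a_j^2\leq(\sum\mu_j^2 a_j^2)^{1/2}(\sum a_j^2)^{1/2}=\sqrt{AB}<\infty$, so the left-hand side of the desired inequality is an absolutely convergent (in fact non-negative) series, and the rearrangements used above are legitimate. The entire argument is algebraic; I expect no real obstacle, the only ``insight'' being the observation that the right-hand side is engineered so that $A+\mu_{k+1}\mu_{k+2}B-(\mu_{k+1}+\mu_{k+2})\sum\mu_j a_j^2$ matches the sum of the quadratic with roots $\mu_{k+1},\mu_{k+2}$, evaluated on the sequence $\mu_j$ and weighted by $a_j^2$.
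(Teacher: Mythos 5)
Your proof is correct. The reduction is exactly right: for $\mu_{k+1}+\mu_{k+2}>0$ the claimed inequality is equivalent to
\begin{equation*}
\sum_{j=k+1}^{\infty}(\mu_j-\mu_{k+1})(\mu_j-\mu_{k+2})\,a_j^2\;\geq\;0,
\end{equation*}
and every summand is non-negative because $\mu_{k+1}\leq\mu_{k+2}\leq\mu_j$ for $j\geq k+2$ while the $j=k+1$ term vanishes; your Cauchy--Schwarz remark settles convergence, so the term-by-term manipulation of the three absolutely convergent non-negative series is legitimate. This is, however, a genuinely different route from the paper's. The paper treats $\sum_j\mu_jx_j^2$ as a constrained optimization problem over all sequences with $\sum\mu_j^2x_j^2=A$ and $\sum x_j^2=B$, applies Lagrange multipliers to show a maximizer must be supported on at most two distinct values $\mu_r,\mu_s$ of the sequence, computes $f_{\max}=\frac{A+\mu_r\mu_sB}{\mu_r+\mu_s}$, and then uses monotonicity of $t\mapsto\frac{A+\mu_r tB}{\mu_r+t}$ to push $(\mu_r,\mu_s)$ down to $(\mu_{k+1},\mu_{k+2})$. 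That argument is longer and carries some unaddressed delicacies (existence of a maximizer for an infinite-dimensional constrained problem, and the passage from critical points to the maximum), all of which your pointwise factorization avoids entirely. What the paper's approach buys is the extra information that the bound is attained exactly when the weight is concentrated on $\mu_{k+1}$ and $\mu_{k+2}$, and a template that generalizes to other objective functionals; for the inequality as stated, your argument is shorter, fully rigorous, and in fact uses slightly less than monotonicity (only that no $\mu_j$ lies strictly between $\mu_{k+1}$ and $\mu_{k+2}$). One small point: you might spell out the ``degenerate case'' $\mu_{k+1}=\mu_{k+2}=0$ explicitly --- there the right-hand side is not defined unless one interprets it as $+\infty$ --- but the paper's statement tacitly excludes this as well.
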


\noindent
By   applying  the lemma 2.1 with $\mu_j=\Gamma_j-\Gamma_1$ and $a_j=r_{j}$, we obtain
\begin{equation*}
\begin{aligned}
&\bigl\{(\Gamma_{k+1}-\Gamma_1)+(\Gamma_{k+2}-\Gamma_1)\bigl\}\int_{\Omega}\varphi pdv\\
&\leq  \bigl(\|p\|^2-\sum_{j=1}^{k}(\Gamma_{j}-\Gamma_1)^2r_{j}^2\bigl)
+(\Gamma_{k+1}-\Gamma_1)(\Gamma_{k+2}-\Gamma_1)\|\varphi\|^2,
\end{aligned}
\end{equation*}
namely,
\begin{equation*}
\begin{aligned}
&\bigl\{(\Gamma_{k+1}-\Gamma_1)+(\Gamma_{k+2}-\Gamma_1)\bigl\}(\int_{\Omega}gu_1pdv-\sum_{j=1}^{k}(\Gamma_{j}-\Gamma_1)r_{j}^2)\\
&\leq  \bigl(\|p\|^2-\sum_{j=1}^{k}(\Gamma_{j}-\Gamma_1)^2r_{j}^2\bigl)
+(\Gamma_{k+1}-\Gamma_1)(\Gamma_{k+2}-\Gamma_1)(\|gu_1\|^2-\sum_{j=1}^{k}r_{j}^2).
\end{aligned}
\end{equation*}
Since
\begin{equation*}
\begin{aligned}
&\bigl\{(\Gamma_{k+1}-\Gamma_1)+(\Gamma_{k+2}-\Gamma_1)\bigl\}\sum_{j=1}^{k}(\Gamma_{j}-\Gamma_1)r_{j}^2\\
&\leq \sum_{j=1}^{k}(\Gamma_{j}-\Gamma_1)^2r_{j}^2
+(\Gamma_{k+1}-\Gamma_1)(\Gamma_{k+2}-\Gamma_1)\sum_{j=1}^{k}r_{j}^2,
\end{aligned}
\end{equation*}
we have
\begin{equation*}
\begin{aligned}
&\bigl\{(\Gamma_{k+1}-\Gamma_1)+(\Gamma_{k+2}-\Gamma_1)\bigl\}\int_{\Omega}gu_1pdv
\leq  \|p\|^2
+(\Gamma_{k+1}-\Gamma_1)(\Gamma_{k+2}-\Gamma_1)\|gu_1\|^2.
\end{aligned}
\end{equation*}
Thus, we have proved the following:
\begin{theorem}
Let $\Omega$ be a bounded domain in  an $n$-dimensional complete Riemannian manifold $M$.
Assume that  $\Gamma_{i}$ is the $i^{\text{th}}$ eigenvalue of the  clamped plate problem {\rm (\ref{ccp})}.
For any  smooth function $g$, we have, for any  integer $k$,
\begin{equation*}
\begin{aligned}
&\bigl\{(\Gamma_{k+2}-\Gamma_1)+(\Gamma_{k+1}-\Gamma_1)\bigl\}\int_{\Omega}gu_1pdv
\leq  \|p\|^2
+(\Gamma_{k+2}-\Gamma_1)(\Gamma_{k+1}-\Gamma_1)\|gu_1\|^2,
\end{aligned}
\end{equation*}
where $p$ is defined by the formula {\rm ($\ref{p}$)} and $u_1$ is the normalized first eigenfunction corresponding to
the first  eigenvalue $\Gamma_1$.
 \end{theorem}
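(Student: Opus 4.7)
The plan is to use a spectral decomposition in the eigenbasis of $\Delta^{2}$ together with the algebraic Lemma~\ref{lemma1}. The first observation is that the auxiliary function $p$ in (\ref{p}) is essentially the commutator between $\Delta^{2}$ and multiplication by $g$ acting on the ground state: one checks that $\Delta^{2}(gu_{1}) = p + g\Delta^{2}u_{1} = p + \Gamma_{1}gu_{1}$, so $p = (\Delta^{2}-\Gamma_{1})(gu_{1})$. Expanding $gu_{1}=\sum_{j\ge 1}r_{j}u_{j}$ in the orthonormal eigenbasis and noting that $gu_{1}$ inherits the clamped boundary conditions $gu_{1}|_{\partial\Omega}=\partial(gu_{1})/\partial\nu|_{\partial\Omega}=0$ from $u_{1}$, one obtains the Fourier coefficient identity $s_{j}=(\Gamma_{j}-\Gamma_{1})r_{j}$ for $p$ by two applications of Green's formula; this is precisely the integration by parts already carried out above.

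With this spectral picture in hand, I would test against the projection $\varphi = gu_{1}-\sum_{j=1}^{k}r_{j}u_{j}$, which is orthogonal to $u_{1},\ldots,u_{k}$. Reading off Fourier coefficients yields
$$\|\varphi\|^{2}=\sum_{j\ge k+1}r_{j}^{2},\qquad \int_{\Omega}\varphi p\,dv=\sum_{j\ge k+1}(\Gamma_{j}-\Gamma_{1})r_{j}^{2},\qquad \|p\|^{2}=\sum_{j\ge 1}(\Gamma_{j}-\Gamma_{1})^{2}r_{j}^{2}.$$
Applying Lemma~\ref{lemma1} to the tail sequences $\mu_{j}=\Gamma_{j}-\Gamma_{1}$ and $a_{j}=r_{j}$ for $j\ge k+1$ produces the weighted inequality
$$\{(\Gamma_{k+1}-\Gamma_{1})+(\Gamma_{k+2}-\Gamma_{1})\}\int_{\Omega}\varphi p\,dv\le \|p\|^{2}-\sum_{j=1}^{k}(\Gamma_{j}-\Gamma_{1})^{2}r_{j}^{2}+(\Gamma_{k+1}-\Gamma_{1})(\Gamma_{k+2}-\Gamma_{1})\|\varphi\|^{2}.$$

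To finish, I would substitute $\int_{\Omega}\varphi p\,dv=\int_{\Omega}gu_{1}p\,dv-\sum_{j=1}^{k}(\Gamma_{j}-\Gamma_{1})r_{j}^{2}$ and $\|\varphi\|^{2}=\|gu_{1}\|^{2}-\sum_{j=1}^{k}r_{j}^{2}$, and then absorb the low-frequency cross terms by the elementary rearrangement $(\Gamma_{j}-\Gamma_{1})\{(\Gamma_{k+1}-\Gamma_{1})+(\Gamma_{k+2}-\Gamma_{1})\}\le (\Gamma_{j}-\Gamma_{1})^{2}+(\Gamma_{k+1}-\Gamma_{1})(\Gamma_{k+2}-\Gamma_{1})$, which holds for each $j\le k$ because, writing $x=\Gamma_{j}-\Gamma_{1}$, $a=\Gamma_{k+1}-\Gamma_{1}$, $b=\Gamma_{k+2}-\Gamma_{1}$ with $0\le x\le a\le b$, the difference equals $-(x-a)(x-b)\le 0$. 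This cancels every $j\le k$ contribution and leaves exactly the inequality asserted in the theorem. The step I expect to require the most care is the Fourier coefficient identity $s_{j}=(\Gamma_{j}-\Gamma_{1})r_{j}$, as it relies on shifting $\Delta^{2}$ across $gu_{1}$ through two Green identities and uses crucially that both $u_{j}$ and $gu_{1}$ satisfy the clamped boundary conditions; everything else is bookkeeping around Lemma~\ref{lemma1}.
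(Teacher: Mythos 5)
Your proposal is correct and follows essentially the same route as the paper: expand $gu_1$ in the eigenbasis, establish $s_j=(\Gamma_j-\Gamma_1)r_j$, apply Lemma~\ref{lemma1} to the tail with $\mu_j=\Gamma_j-\Gamma_1$, $a_j=r_j$, and absorb the $j\le k$ terms via $-(x-a)(x-b)\le 0$. The only (harmless) difference is cosmetic: you obtain $s_j=(\Gamma_j-\Gamma_1)r_j$ from the clean identity $p=(\Delta^2-\Gamma_1)(gu_1)$ plus Green's formula with the clamped boundary conditions, whereas the paper reaches the same identity by a direct integration-by-parts computation.
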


\begin{lemma}
\begin{equation*}
\int_{\Omega}gu_1pdv=\displaystyle{\int_{\Omega}}\biggl\{(\Delta g)^2 u_1^2
+4(\nabla g \cdot \nabla u_1)^2 -2|\nabla g |^2 u_1 \Delta u_1
+4u_1\Delta g \nabla g\cdot \nabla u_1\biggl\}dv.
\end{equation*}
\end{lemma}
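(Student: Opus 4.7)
The plan is to first establish the algebraic identity $p = \Delta^2(gu_1) - g\Delta^2 u_1$, which emerges from two applications of the Leibniz rule for the Laplacian. Writing $\Delta(gu_1) = u_1\Delta g + 2\nabla g\cdot\nabla u_1 + g\Delta u_1$ and applying $\Delta$ once more, then expanding each resulting $\Delta(fh)$ by the product rule, one finds
\begin{equation*}
\Delta^2(gu_1) = u_1\Delta^2 g + 2\nabla(\Delta g)\cdot\nabla u_1 + 2\Delta g\,\Delta u_1 + 2\Delta(\nabla g\cdot\nabla u_1) + 2\nabla g\cdot\nabla(\Delta u_1) + g\Delta^2 u_1,
\end{equation*}
in which the first six summands are precisely the definition $(\ref{p})$ of $p$.

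Next I would observe that both $gu_1$ and $g^2 u_1$ satisfy the clamped-plate boundary conditions, since $u_1 = \partial u_1/\partial\nu = 0$ on $\partial\Omega$. Two applications of Green's formula therefore yield $\int_\Omega gu_1\,\Delta^2(gu_1)\,dv = \int_\Omega (\Delta(gu_1))^2\,dv$; combined with the eigenvalue equation $\Delta^2 u_1 = \Gamma_1 u_1$, this gives
\begin{equation*}
\int_\Omega gu_1\,p\,dv = \int_\Omega (\Delta(gu_1))^2\,dv - \Gamma_1\int_\Omega g^2 u_1^2\,dv.
\end{equation*}

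The final step is to expand $\Delta(gu_1) = u_1\Delta g + 2\nabla g\cdot\nabla u_1 + g\Delta u_1$ and square. This produces the three ``matching'' terms $u_1^2(\Delta g)^2$, $4(\nabla g\cdot\nabla u_1)^2$ and $4u_1\Delta g\,\nabla g\cdot\nabla u_1$ that already appear on the right-hand side of the lemma, plus three extra contributions containing $\Delta u_1$, namely $g^2(\Delta u_1)^2$, $2gu_1\Delta g\,\Delta u_1$ and $4g\Delta u_1\,\nabla g\cdot\nabla u_1$. To dispose of these together with the $-\Gamma_1\int g^2 u_1^2\,dv$ term, I would use Green's formula a second time to rewrite $\Gamma_1\int_\Omega g^2 u_1^2\,dv = \int_\Omega\Delta(g^2 u_1)\,\Delta u_1\,dv$ and then expand $\Delta(g^2 u_1) = 2gu_1\Delta g + 2u_1|\nabla g|^2 + 4g\nabla g\cdot\nabla u_1 + g^2\Delta u_1$. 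The three unwanted terms cancel exactly, leaving only $-2\int_\Omega |\nabla g|^2 u_1\,\Delta u_1\,dv$, which is the fourth term in the statement. I do not anticipate any genuine obstacle beyond careful bookkeeping; the argument is essentially algebraic once the identity $p = \Delta^2(gu_1) - g\Delta^2 u_1$ is recognized, and the only delicate point is the verification of the clamped boundary conditions for $gu_1$ and $g^2 u_1$ so that the two integrations by parts produce no boundary contributions.
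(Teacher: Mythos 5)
Your proof is correct, and it takes a genuinely different route from the paper's. The paper never identifies $p$ with a commutator; it simply applies Stokes' theorem separately to the three contributions $2\int_\Omega gu_1\nabla(\Delta g)\cdot\nabla u_1\,dv$, $2\int_\Omega gu_1\Delta(\nabla g\cdot\nabla u_1)\,dv$ and $2\int_\Omega gu_1\nabla g\cdot\nabla(\Delta u_1)\,dv$, and the resulting terms cancel against the contributions of $\Delta^2g\cdot u_1$ and $2\Delta g\,\Delta u_1$ to leave exactly the four integrands of the statement; only the boundary conditions $u_1=\partial u_1/\partial\nu=0$ are used, not the eigenvalue equation. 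Your route --- recognizing $p=\Delta^2(gu_1)-g\Delta^2u_1=[\Delta^2,g]u_1$, using the clamped boundary conditions of $gu_1$ and $g^2u_1$ to get $\int_\Omega gu_1\,p\,dv=\int_\Omega(\Delta(gu_1))^2dv-\int_\Omega\Delta(g^2u_1)\,\Delta u_1\,dv$, and then expanding --- checks out in every detail (the cancellation of $g^2(\Delta u_1)^2$, $2gu_1\Delta g\,\Delta u_1$ and $4g\,\Delta u_1\,\nabla g\cdot\nabla u_1$ is exact), and it is arguably more illuminating: it exhibits $\int_\Omega gu_1\,p\,dv$ as the difference between the biharmonic quadratic form of the trial function $gu_1$ and $\Gamma_1\|gu_1\|^2$, which is the structural reason this quantity drives all Payne--P\'olya--Weinberger type arguments. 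Note that your detour through $\Gamma_1\int_\Omega g^2u_1^2\,dv$ is harmless but inessential: one can pass directly from $\int_\Omega g^2u_1\,\Delta^2u_1\,dv$ to $\int_\Omega\Delta(g^2u_1)\,\Delta u_1\,dv$ by Green's formula, so that, like the paper's computation, the identity holds for any $u_1$ vanishing to first order on $\partial\Omega$, not only for an eigenfunction.
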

\begin{proof}
From  Stokes' theorem,  we infer
\begin{equation*}
2\displaystyle{\int_{\Omega}}g u_1\nabla(\Delta g) \cdot\nabla u_1dv
=\displaystyle{\int_{\Omega}}\biggl\{2u_1\Delta g\nabla u_1\cdot  \nabla g
+u_1^2(\Delta g)^2 -g u_1^2\Delta^2 g\biggl\}dv,
\end{equation*}
\begin{equation*}
2\displaystyle{\int_{\Omega}} g u_1\Delta(\nabla g \cdot \nabla u_1)dv
=\displaystyle{\int_{\Omega}}\biggl\{2u_1\Delta g \nabla g \cdot \nabla u_1
+4(\nabla g \cdot \nabla u_1)^2
+2g \Delta u_1\nabla g \cdot \nabla u_1\biggl\}dv,
\end{equation*}
\begin{equation*}
2\displaystyle{\int_{\Omega}}gu_1\nabla g \cdot \nabla (\Delta u_1)dv
=-2\displaystyle{\int_{\Omega}}\biggl(|\nabla g |^2 u_1 \Delta u_1
+g \Delta u_1\nabla g \cdot \nabla u_1
+ g \Delta g u_1 \Delta u_1\biggl)dv.
\end{equation*}\\
From the definition of $p$, we obtain
\begin{equation*}
\int_{\Omega}gu_1pdv=\displaystyle{\int_{\Omega}}\biggl\{(\Delta g)^2 u_1^2
+4(\nabla g \cdot \nabla u_1)^2 -2|\nabla g |^2 u_1 \Delta u_1
+4u_1 \Delta g \nabla g\cdot \nabla u_1\biggl\}dv.
\end{equation*}

\end{proof}

\vskip2mm
\noindent
For any smooth function $f$ in $M$ and constant $a$, we consider $g_1=\cos (af)$. We have
\begin{equation*}
\nabla g_1=-a\sin (af)\nabla f, \quad \Delta g_1 =-a^2\cos (af)|\nabla f|^2-a\sin (af) \Delta f
\end{equation*}

\begin{equation*}
 \begin{aligned}
 \nabla \Delta g_1 &=a^3\sin (af)|\nabla f|^2\nabla f-a^2\cos (af)\nabla(|\nabla f|^2)\\
 &-a^2\cos (af) \Delta f\nabla f
-a\sin (af)\nabla(\Delta f)
\end{aligned}
\end{equation*}

\begin{equation*}
\begin{aligned}
 \Delta^2 g_1 &=a^4\cos (af)|\nabla f|^4+2a^3\sin (af) \nabla(|\nabla f|^2)\cdot\nabla f+2a^3\sin (af) |\nabla f|^2\Delta f\\
 & - a^2\cos (af)\Delta (|\nabla f|^2) -2a^2\cos (af) \nabla(\Delta f)\cdot\nabla f\\
 &-a^2\cos (af) (\Delta f)^2-a\sin (af) \Delta^2f.
\end{aligned}
\end{equation*}
In the same way, for  $g_2=\sin  (af)$, we have
\begin{equation*}
\nabla g_2=a\cos (af)\nabla f, \quad \Delta g_2 =-a^2\sin (af)|\nabla f|^2+a\cos (af) \Delta f
\end{equation*}

\begin{equation*}
\begin{aligned}
 \nabla \Delta g_2 &=-a^3\cos (af)|\nabla f|^2\nabla f-a^2\sin (af)\nabla(|\nabla f|^2)\\
 &-a^2\sin (af) \Delta f\nabla f
+a\cos (af)\nabla(\Delta f)
\end{aligned}
\end{equation*}

\begin{equation*}
\begin{aligned}
 \Delta^2 g_2 &=a^4\sin (af)|\nabla f|^4-2a^3\cos (af) \nabla(|\nabla f|^2)\cdot\nabla f\\
 &-2a^3\cos (af) |\nabla f|^2\Delta f- a^2\sin (af)\Delta (|\nabla f|^2) \\
 &-2a^2\sin (af) \nabla(\Delta f)\cdot\nabla f-a^2\sin (af) (\Delta f)^2+a\cos (af) \Delta^2f.
\end{aligned}
\end{equation*}
Thus, we obtain the following:
\begin{lemma}
If the function $f$ satisfies $|\nabla f|^2=1$ and $\Delta f =b=$constant, we have
\begin{equation*}
\nabla g_1=-a\sin (af)\nabla f, \quad \Delta g_1 =-a^2\cos (af)-ab\sin (af),
\end{equation*}
\begin{equation*}
 \begin{aligned}
 \nabla \Delta g_1 &=a^3\sin (af)\nabla f-a^2b\cos (af) \nabla f,
\end{aligned}
\end{equation*}
\begin{equation*}
\begin{aligned}
 \Delta^2 g_1 &=a^4\cos (af)+2a^3b\sin(af)-a^2b^2\cos (af),
\end{aligned}
\end{equation*}
\begin{equation*}
\nabla g_2=a\cos (af)\nabla f, \quad \Delta g_2 =-a^2\sin (af)+ab\cos (af),
\end{equation*}

\begin{equation*}
\begin{aligned}
 \nabla \Delta g_2 &=-a^3\cos (af)\nabla f-a^2b\sin (af) \nabla f,
\end{aligned}
\end{equation*}
\begin{equation*}
\begin{aligned}
 \Delta^2 g_2 &=a^4\sin (af) -2a^3b\cos (af)-a^2b^2\sin (af).
\end{aligned}
\end{equation*}
\end{lemma}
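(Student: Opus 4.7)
The plan is to derive every claim in Lemma 2.3 by specialization from the general identities for $\nabla g_i, \Delta g_i, \nabla\Delta g_i, \Delta^2 g_i$ already displayed just above the lemma statement. Nothing new has to be computed; the task is to track which terms vanish or simplify under the two hypotheses $|\nabla f|^2=1$ and $\Delta f=b$ (constant).

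First I would extract the direct consequences of the hypotheses on the derivatives of the scalar quantities $|\nabla f|^2$ and $\Delta f$. From $|\nabla f|^2\equiv 1$ we immediately obtain $\nabla(|\nabla f|^2)=0$ and hence $\Delta(|\nabla f|^2)=0$. From $\Delta f \equiv b$ we obtain $\nabla(\Delta f)=0$ and hence $\Delta^2 f = \operatorname{div}\nabla(\Delta f)=0$. These four identities are precisely what is needed to kill the ``extra'' terms in the pre-displayed formulas.

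Next I would plug these into the formula for $\Delta g_1=-a^2\cos(af)|\nabla f|^2-a\sin(af)\Delta f$ to get $\Delta g_1=-a^2\cos(af)-ab\sin(af)$, and similarly for $\Delta g_2$. The gradient formula $\nabla g_1=-a\sin(af)\nabla f$ requires no simplification. For $\nabla\Delta g_1$, the four-term expression above the lemma collapses because the middle two terms carry factors $\nabla(|\nabla f|^2)$ and $\nabla(\Delta f)$, leaving only $a^3\sin(af)|\nabla f|^2\nabla f-a^2\cos(af)\Delta f\,\nabla f=a^3\sin(af)\nabla f-a^2b\cos(af)\nabla f$; the analogous cancellation yields $\nabla\Delta g_2$. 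Finally, in the seven-term formula for $\Delta^2 g_1$, four of the terms involve $\nabla(|\nabla f|^2)$, $\Delta(|\nabla f|^2)$, $\nabla(\Delta f)$, or $\Delta^2 f$ and thus vanish, while the surviving three terms simplify using $|\nabla f|^4=1$, $|\nabla f|^2\Delta f=b$, $(\Delta f)^2=b^2$ to give $a^4\cos(af)+2a^3b\sin(af)-a^2b^2\cos(af)$; the same pattern produces $\Delta^2 g_2$.

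There is no real obstacle here: the whole lemma is a substitution exercise whose purpose is to prepare clean formulas for the proof of the main Theorem 1.1 (where one chooses $f$ to be a coordinate function on $\mathbf{R}^n$, so that $|\nabla f|^2=1$ and $\Delta f=0$). The only thing to be careful about is bookkeeping of signs when differentiating $\cos(af)$ versus $\sin(af)$ through the chain rule, which is the sole reason $g_1$ and $g_2$ are treated separately rather than in a single complex-exponential calculation.
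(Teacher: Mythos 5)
Your proposal is correct and is exactly what the paper does: the lemma is stated as an immediate consequence ("Thus, we obtain the following") of the general formulas for $\nabla g_i$, $\Delta g_i$, $\nabla\Delta g_i$, $\Delta^2 g_i$ displayed just before it, with the hypotheses forcing $\nabla(|\nabla f|^2)=\Delta(|\nabla f|^2)=\nabla(\Delta f)=\Delta^2 f=0$ so that the extra terms drop out. Your bookkeeping of the surviving terms matches the paper's statement in every case.
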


\noindent
By defining
\begin{equation}
p_1=\Delta^2 g_1 \cdot u_1
+2\nabla(\Delta g_1)\cdot \nabla u_1+2\Delta g_1\Delta u_1
+2\Delta(\nabla g_1 \cdot \nabla u_1)
+2\nabla g_1 \cdot \nabla(\Delta u_1),
\end{equation}
and
\begin{equation}
p_2=\Delta^2 g_2 \cdot u_1
+2\nabla(\Delta g_2)\cdot \nabla u_1+2\Delta g_2\Delta u_1
+2\Delta(\nabla g_2 \cdot \nabla u_1)
+2\nabla g_2 \cdot \nabla(\Delta u_1),
\end{equation}
we have
\begin{proposition}
If the function $f$ satisfies $|\nabla f|^2=1$ and $\Delta f =b=$constant, we have
\begin{equation*}
\begin{aligned}
&|p_1|^2+|p_2|^2\\
&=\biggl((a^4-a^2b^2\bigl)u_1-4a^2b\nabla f\cdot \nabla u_1-2a^2\Delta u_1
-4a^2\nabla f\cdot \nabla(\nabla f \cdot \nabla u_1)\biggl)^2\\
&+\biggl(2a^3bu_1+4a^3\nabla f\cdot \nabla u_1-2ab\Delta u_1
-2a\Delta(\nabla f \cdot \nabla u_1)
-2a\nabla f \cdot \nabla(\Delta u_1)\biggl )^2.
\end{aligned}
\end{equation*}
\end{proposition}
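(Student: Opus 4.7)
The statement is a purely computational identity obtained by substituting the closed-form expressions of Lemma~2.3 into the definitions (2.7) and (2.8) of $p_1$ and $p_2$. My plan is therefore to organize the calculation so that $p_1$ and $p_2$ each split cleanly into a $\cos(af)$-part and a $\sin(af)$-part, after which the Pythagorean identity $\cos^2(af)+\sin^2(af)=1$ collapses $|p_1|^2+|p_2|^2$ to the claimed expression.

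The first step is to substitute the formulas for $\nabla g_1$, $\Delta g_1$, $\nabla\Delta g_1$, $\Delta^2 g_1$ from Lemma~2.3 into four of the five summands of $p_1$. Only the cross term $2\Delta(\nabla g_1\cdot\nabla u_1)=-2a\,\Delta\bigl(\sin(af)\,\nabla f\cdot\nabla u_1\bigr)$ requires work: I will expand it by the product rule
$$
\Delta(\phi\psi)=\phi\,\Delta\psi+2\nabla\phi\cdot\nabla\psi+\psi\,\Delta\phi
$$
with $\phi=\sin(af)$ and $\psi=\nabla f\cdot\nabla u_1$. Using $|\nabla f|^2=1$ and $\Delta f=b$, we have $\nabla\sin(af)=a\cos(af)\nabla f$ and $\Delta\sin(af)=-a^2\sin(af)+ab\cos(af)$, so this one summand contributes two $\cos(af)$-terms (involving $\nabla f\cdot\nabla u_1$ and $\nabla f\cdot\nabla(\nabla f\cdot\nabla u_1)$) and one $\sin(af)$-term (involving $\Delta(\nabla f\cdot\nabla u_1)$).

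Collecting all $\cos(af)$-coefficients in $p_1$ should yield exactly
$$
A:=(a^4-a^2b^2)u_1-4a^2b\,\nabla f\cdot\nabla u_1-2a^2\Delta u_1-4a^2\nabla f\cdot\nabla(\nabla f\cdot\nabla u_1),
$$
while the $\sin(af)$-coefficient should be
$$
B:=2a^3b\,u_1+4a^3\nabla f\cdot\nabla u_1-2ab\,\Delta u_1-2a\,\Delta(\nabla f\cdot\nabla u_1)-2a\,\nabla f\cdot\nabla(\Delta u_1),
$$
so that $p_1=\cos(af)\,A+\sin(af)\,B$. Running the identical computation for $g_2=\sin(af)$ — where the roles of $\sin$ and $\cos$ interchange and the derivative signs that distinguish the expansions of $\cos(af)$ from $\sin(af)$ reverse — should give $p_2=\sin(af)\,A-\cos(af)\,B$. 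Then
$$
|p_1|^2+|p_2|^2=\bigl(\cos^2(af)+\sin^2(af)\bigr)(A^2+B^2)=A^2+B^2,
$$
which is the asserted identity.

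The only real obstacle is bookkeeping. Both $2\nabla(\Delta g_i)\cdot\nabla u_1$ and the cross term $2\Delta(\nabla g_i\cdot\nabla u_1)$ contribute a $\cos(af)$-part and a $\sin(af)$-part each, so the coefficient of $\nabla f\cdot\nabla u_1$ in $A$ receives two equal contributions that must add to $-4a^2b$ (and similarly the coefficient of $\nabla f\cdot\nabla u_1$ in $B$ must add to $+4a^3$). Verifying these sign matches, together with the companion swap for $p_2$, is the whole content of the proof; no further analytic input beyond the product rule and the hypotheses $|\nabla f|^2=1$, $\Delta f=b$ is needed.
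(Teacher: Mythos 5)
Your proposal is correct and follows essentially the same route as the paper: substitute the closed forms from Lemma~2.3, expand the cross term $2\Delta(\nabla g_i\cdot\nabla u_1)$ by the product rule using $|\nabla f|^2=1$ and $\Delta f=b$, collect coefficients to get $p_1=A\cos(af)+B\sin(af)$ and $p_2=A\sin(af)-B\cos(af)$, and apply $\cos^2+\sin^2=1$. Your bookkeeping observations (the two contributions of $-2a^2b$, resp.\ $+2a^3$, to the coefficient of $\nabla f\cdot\nabla u_1$) match the paper's computation exactly.
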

\begin{proof}
From the above lemma 2.3, we have
\begin{equation*}
\begin{aligned}
p_1&=\bigl(a^4\cos (af)+2a^3b\sin(af)-a^2b^2\cos (af)\bigl)u_1\\
&+2\bigl(a^3\sin (af)-a^2b\cos (af) \bigl)\nabla f\cdot \nabla u_1-2(a^2\cos (af)+ab\sin (af))\Delta u_1\\
&-2a\Delta(\sin (af)\nabla f \cdot \nabla u_1)
-2a\sin (af)\nabla f \cdot \nabla(\Delta u_1)
\end{aligned}
\end{equation*}
and
\begin{equation*}
\begin{aligned}
&\Delta(\sin (af)\nabla f \cdot \nabla u_1)=\sin (af)\Delta(\nabla f \cdot \nabla u_1)\\
&+2a\cos (af)\nabla f\cdot \nabla(\nabla f \cdot \nabla u_1))
-\biggl(a^2\sin (af)-ab\cos (af)\biggl)\nabla f \cdot \nabla u_1,
\end{aligned}
\end{equation*}
\begin{equation*}
\begin{aligned}
p_2&=\bigl(a^4\sin (af) -2a^3b\cos (af)-a^2b^2\sin (af)\bigl)u_1\\
&-2\bigl(a^3\cos (af)+a^2b\sin (af)  \bigl)\nabla f\cdot \nabla u_1-2(a^2\sin (af)-ab\cos (af))\Delta u_1\\
&+2a\Delta(\cos (af)\nabla f \cdot \nabla u_1)
+2a\cos (af)\nabla f \cdot \nabla(\Delta u_1)
\end{aligned}
\end{equation*}
and
\begin{equation*}
\begin{aligned}
&\Delta(\cos (af)\nabla f \cdot \nabla u_1)=\cos (af)\Delta(\nabla f \cdot \nabla u_1)\\
&-2a\sin (af)\nabla f\cdot \nabla(\nabla f \cdot \nabla u_1)
-\biggl(a^2\cos (af)+ab\sin(af)\biggl)\nabla f \cdot \nabla u_1.
\end{aligned}
\end{equation*}
Hence,   we infer
\begin{equation*}
\begin{aligned}
&p_1=\biggl((a^4-a^2b^2)u_1-4a^2b \nabla f\cdot \nabla u_1-2a^2\Delta u_1
-4a^2\nabla f\cdot \nabla(\nabla f \cdot \nabla u_1)\biggl)\cos (af)\\
&+\biggl(2a^3bu_1+4a^3\nabla f\cdot \nabla u_1-2ab\Delta u_1
-2a\Delta(\nabla f \cdot \nabla u_1)
-2a\nabla f \cdot \nabla(\Delta u_1)\biggl)\sin (af),
\end{aligned}
\end{equation*}
\begin{equation*}
\begin{aligned}
&p_2
=\biggl((a^4-a^2b^2\bigl)u_1-4a^2b\nabla f\cdot \nabla u_1-2a^2\Delta u_1
-4a^2\nabla f\cdot \nabla(\nabla f \cdot \nabla u_1)\biggl)\sin (af)\\
&-\biggl(2a^3bu_1+4a^3\nabla f\cdot \nabla u_1-2ab\Delta u_1
-2a\Delta(\nabla f \cdot \nabla u_1)
-2a\nabla f \cdot \nabla(\Delta u_1)\biggl )\cos (af).
\end{aligned}
\end{equation*}
From the above two equalities, we obtain
\begin{equation*}
\begin{aligned}
&|p_1|^2+|p_2|^2\\
&=\biggl((a^4-a^2b^2\bigl)u_1-4a^2b\nabla f\cdot \nabla u_1-2a^2\Delta u_1
-4a^2\nabla f\cdot \nabla(\nabla f \cdot \nabla u_1)\biggl)^2\\
&+\biggl(2a^3bu_1+4a^3\nabla f\cdot \nabla u_1-2ab\Delta u_1
-2a\Delta(\nabla f \cdot \nabla u_1)
-2a\nabla f \cdot \nabla(\Delta u_1)\biggl )^2.
\end{aligned}
\end{equation*}

\end{proof}

\begin{proposition}
If the function $f$ satisfies $|\nabla f|^2=1$ and $\Delta f =b=$constant, we have
\begin{equation*}
\begin{aligned}
\int_{\Omega}g_1u_1p_1dv+\int_{\Omega}g_2u_1p_2dv&=\displaystyle{\int_{\Omega}}\biggl\{(a^4-a^2b^2) u_1^2
+4a^2(\nabla f \cdot \nabla u_1)^2 -2a^2u_1 \Delta u_1\biggl\}dv.
\end{aligned}
\end{equation*}
\end{proposition}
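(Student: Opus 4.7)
The plan is to apply Lemma 2.2 (the formula for $\int_\Omega g u_1 p\, dv$) separately to $g=g_1=\cos(af)$ and $g=g_2=\sin(af)$, then add the two identities and use the Pythagorean identity $\sin^2(af)+\cos^2(af)=1$ to make the trigonometric weights disappear from most of the terms. From Lemma 2.3 we already have closed forms for $\nabla g_i$ and $\Delta g_i$ under the assumptions $|\nabla f|^2=1$ and $\Delta f=b$, and this is what makes the summation collapse.

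Concretely, the sums that appear in Lemma 2.2 behave as follows: $|\nabla g_1|^2+|\nabla g_2|^2=a^2$ (using $|\nabla f|^2=1$), so the third term contributes $-2a^2 u_1\Delta u_1$; similarly $(\nabla g_1\cdot\nabla u_1)^2+(\nabla g_2\cdot\nabla u_1)^2=a^2(\nabla f\cdot\nabla u_1)^2$, giving the $4a^2(\nabla f\cdot\nabla u_1)^2$ term. The squares of the Laplacians expand to
\begin{equation*}
(\Delta g_1)^2+(\Delta g_2)^2=a^4+a^2b^2,
\end{equation*}
where the cross products $\pm 2a^3b\sin(af)\cos(af)$ cancel. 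The only mildly tricky piece is the cross term $4u_1\Delta g_i\,\nabla g_i\cdot\nabla u_1$: its sum reduces via the same $\sin^2+\cos^2=1$ cancellation to $4a^2 b\, u_1\,\nabla f\cdot\nabla u_1$, leaving a term that does not match the stated identity at first glance.

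The key step — the one place where a genuine computation beyond trigonometric bookkeeping is needed — is to dispose of this leftover $4a^2b\,u_1(\nabla f\cdot\nabla u_1)$ together with the surplus $a^2b^2 u_1^2$ coming from $(\Delta g_1)^2+(\Delta g_2)^2$. I would do this by integration by parts: since $u_1$ vanishes on $\partial\Omega$,
\begin{equation*}
\int_\Omega u_1\,\nabla f\cdot\nabla u_1\,dv=\tfrac12\int_\Omega\nabla f\cdot\nabla(u_1^2)\,dv=-\tfrac12\int_\Omega u_1^2\,\Delta f\,dv=-\tfrac{b}{2}\int_\Omega u_1^2\,dv,
\end{equation*}
using the assumption $\Delta f=b$. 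Substituting gives $4a^2b\int_\Omega u_1\nabla f\cdot\nabla u_1\,dv=-2a^2b^2\int_\Omega u_1^2\,dv$, which combines with $(a^4+a^2b^2)\int_\Omega u_1^2\,dv$ to produce exactly $(a^4-a^2b^2)\int_\Omega u_1^2\,dv$.

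Adding the four contributions yields the right-hand side stated in the proposition. I expect no serious obstacle: the only subtle point is noticing that one must \emph{not} discard the cross term produced by $4u_1\Delta g\,\nabla g\cdot\nabla u_1$, but instead convert it into an $u_1^2$-integral using the boundary condition and $\Delta f=b$; everything else is trigonometric bookkeeping guided by Lemma 2.3.
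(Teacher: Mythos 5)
Your proposal is correct and follows essentially the same route as the paper: apply Lemma 2.2 to $g_1$ and $g_2$, sum, use $\sin^2(af)+\cos^2(af)=1$ to collapse the trigonometric weights, and convert the leftover $4a^2b\,u_1\nabla f\cdot\nabla u_1$ term via Stokes' theorem into $-2a^2b^2u_1^2$, which combines with the $a^2b^2u_1^2$ from $(\Delta g_1)^2+(\Delta g_2)^2$ to give $(a^4-a^2b^2)u_1^2$. This is exactly the paper's argument.
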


\begin{proof} Since
\begin{equation*}
\begin{aligned}
\int_{\Omega}g_1u_1p_1dv&=\displaystyle{\int_{\Omega}}\biggl\{(a^2\cos (af)+ab\sin (af))^2 u_1^2 \\
&+4a^2(\sin (af))^2(\nabla f \cdot \nabla u_1)^2 -2a^2(\sin (af))^2 u_1 \Delta u_1\\
&+4a\sin (af) (a^2\cos (af)+ab\sin (af))u_1 \nabla f\cdot \nabla u_1\biggl\}dv
\end{aligned}
\end{equation*}
and
\begin{equation*}
\begin{aligned}
\int_{\Omega}g_2u_1p_2dv&=\displaystyle{\int_{\Omega}}\biggl\{(a^2\sin (af)-ab\cos (af))^2 u_1^2 \\
&+4a^2(\cos (af))^2(\nabla f \cdot \nabla u_1)^2 -2a^2(\cos (af))^2 u_1 \Delta u_1\\
&-4a\cos (af)(a^2\sin (af)-ab\cos (af))u_1\nabla f\cdot \nabla u_1\biggl\}dv,
\end{aligned}
\end{equation*}
we infer
\begin{equation*}
\begin{aligned}
&\int_{\Omega}g_1u_1p_1dv+\int_{\Omega}g_2u_1p_2dv\\
&=\displaystyle{\int_{\Omega}}\biggl\{(a^4+a^2b^2) u_1^2
+4a^2(\nabla f \cdot \nabla u_1)^2
-2a^2u_1 \Delta u_1+4a^2bu_1 \nabla f\cdot \nabla u_1\biggl\}dv.
\end{aligned}
\end{equation*}
According to Stokes formula, we know
$$
\int_{\Omega}2u_1\nabla f\cdot \nabla u_1dv=-\int_{\Omega}bu^2_1dv.
$$
Hence,
we get
\begin{equation*}
\begin{aligned}
\int_{\Omega}g_1u_1p_1dv+\int_{\Omega}g_2u_1p_2dv&=\displaystyle{\int_{\Omega}}\biggl\{(a^4-a^2b^2) u_1^2
+4a^2(\nabla f \cdot \nabla u_1)^2 -2a^2u_1 \Delta u_1\biggl\}dv.
\end{aligned}
\end{equation*}
\end{proof}

\section{The proof of the theorem 1.1}
\vskip2mm
\noindent
{\it Proof of Theorem} 1.1.  Since $\Omega$ is a bounded domain in the Euclidean  space $\mathbf {R}^n$. Let $(x_1, x_2, \cdots, x_n)$ be
the standard coordinate. By taking $f=x_m$, for $m=1, 2, \cdots, n$, we know
$$
|\nabla f|^2=1, \quad \Delta f=0.
$$
Thus, from the propositions 2.1, we obtain, for $m=1, 2, \cdots, n$,

\begin{equation*}
\begin{aligned}
|p_1|^2+|p_2|^2
&=\biggl(a^4u_1-2a^2\Delta u_1
-4a^2\dfrac{\partial^2 u_1}{\partial x_m^2}
\biggl)^2\\
&+\biggl(4a^3\dfrac{\partial u_1}{\partial x_m}-2a\Delta(\dfrac{\partial u_1}{\partial x_m})
-2a\dfrac{\partial(\Delta  u_1)}{\partial x_m}\biggl )^2\\
&=a^4\biggl(a^2u_1-2\Delta u_1
-4\dfrac{\partial^2 u_1}{\partial x_m^2}
\biggl)^2+16a^2\biggl(a^2\dfrac{\partial u_1}{\partial x_m}-\Delta(\dfrac{\partial u_1}{\partial x_m})\biggl )^2.
\end{aligned}
\end{equation*}
Hence,
\begin{equation*}
\begin{aligned}
\int_{\Omega}|p_1|^2dv+\int_{\Omega}|p_2|^2dv
&=\int_{\Omega}a^4\biggl(a^2u_1-2\Delta u_1
-4\dfrac{\partial^2 u_1}{\partial x_m^2}
\biggl)^2dv\\
&+\int_{\Omega}16a^2\biggl(a^2\dfrac{\partial u_1}{\partial x_m}-\Delta(\dfrac{\partial u_1}{\partial x_m})
\biggl )^2dv
\end{aligned}
\end{equation*}
holds.
By a direct computation, we infer
\begin{equation*}
\begin{aligned}
&\int_{\Omega}\biggl(a^2u_1-2\Delta u_1
-4\dfrac{\partial^2 u_1}{\partial x_m^2}\biggl)^2dv
\\&=a^4+4\Gamma_1+16\int_{\Omega}(\dfrac{\partial^2 u_1}{\partial x_m^2})^2dv
\\
&+4a^2\int_{\Omega}|\nabla u_1|^2dv-8a^2\int_{\Omega}u_1\dfrac{\partial^2 u_1}{\partial x_m^2}dv
+16\int_{\Omega}\Delta u_1\dfrac{\partial^2 u_1}{\partial x_m^2}dv
\end{aligned}
\end{equation*}
and
\begin{equation*}
\begin{aligned}
&\int_{\Omega}\biggl(a^2\dfrac{\partial u_1}{\partial x_m}-\Delta(\dfrac{\partial u_1}{\partial x_m})
\biggl )^2dv\\
&=\int_{\Omega}\biggl(a^4(\dfrac{\partial u_1}{\partial x_m})^2+(\Delta(\dfrac{\partial u_1}{\partial x_m}))^2
-2a^2\dfrac{\partial u_1}{\partial x_m}\Delta(\dfrac{\partial u_1}{\partial x_m})\biggl )dv.\\
\end{aligned}
\end{equation*}
We derive
\begin{equation*}
\begin{aligned}
&\int_{\Omega}|p_1|^2dv+\int_{\Omega}|p_2|^2dv\\
&=a^4\biggl\{a^4+4\Gamma_1+16\int_{\Omega}(\dfrac{\partial^2 u_1}{\partial x_m^2})^2dv
+4a^2\int_{\Omega}|\nabla u_1|^2dv\\
&-8a^2\int_{\Omega}u_1\dfrac{\partial^2 u_1}{\partial x_m^2}dv
+16\int_{\Omega}\Delta u_1\dfrac{\partial^2 u_1}{\partial x_m^2}dv\biggl\}\\
&+16a^2\biggl\{\int_{\Omega}\biggl(a^4(\dfrac{\partial u_1}{\partial x_m})^2+(\Delta(\dfrac{\partial u_1}{\partial x_m}))^2
-2a^2\dfrac{\partial u_1}{\partial x_m}\Delta(\dfrac{\partial u_1}{\partial x_m})\biggl )dv\biggl\}.
\end{aligned}
\end{equation*}
From the proposition 2.2, we infer

\begin{equation*}
\begin{aligned}
\int_{\Omega}g_1u_1p_1dv+\int_{\Omega}g_2u_1p_2dv&=\displaystyle{\int_{\Omega}}\biggl\{a^4 u_1^2
+4a^2(\dfrac{\partial u_1}{\partial x_m})^2 -2a^2u_1\Delta u_1\biggl\}dv\\
&=a^4
+2a^2\displaystyle{\int_{\Omega}}\biggl\{2(\dfrac{\partial u_1}{\partial x_m})^2 +|\nabla u_1|^2 \biggl\}dv.
\end{aligned}
\end{equation*}
We apply the theorem 2.1 to functions $g=g_1$ and $g=g_2$, respectively and take summation for them, we have

\begin{equation}
\begin{aligned}
&\bigl\{(\Gamma_{k+1}-\Gamma_1)+(\Gamma_{k+2}-\Gamma_1)\bigl\}\biggl(a^4
+2a^2\displaystyle{\int_{\Omega}}\biggl\{2(\dfrac{\partial u_1}{\partial x_m})^2 +|\nabla u_1|^2 \biggl\}dv\biggl)\\
&\leq  a^4\biggl\{a^4+4\Gamma_1+16\int_{\Omega}(\dfrac{\partial^2 u_1}{\partial x_m^2})^2dv
+4a^2\int_{\Omega}|\nabla u_1|^2dv\\
&-8a^2\int_{\Omega}u_1\dfrac{\partial^2 u_1}{\partial x_m^2}dv
+16\int_{\Omega}\Delta u_1\dfrac{\partial^2 u_1}{\partial x_m^2}dv\biggl\}\\
&+16a^2\biggl\{\int_{\Omega}\biggl(a^4(\dfrac{\partial u_1}{\partial x_m})^2+(\Delta(\dfrac{\partial u_1}{\partial x_m}))^2
-2a^2\dfrac{\partial u_1}{\partial x_m}\Delta(\dfrac{\partial u_1}{\partial x_m})\biggl )dv\biggl\}\\
&+(\Gamma_{k+1}-\Gamma_1)(\Gamma_{k+2}-\Gamma_1),
\end{aligned}
\end{equation}
Taking summation for $m$ from $1$ to $n$ and making use of Stokes formula, we have
\begin{equation}
\begin{aligned}
&\bigl\{(\Gamma_{k+1}-\Gamma_1)+(\Gamma_{k+2}-\Gamma_1)\bigl\}\bigl(na^4
+2a^2(2+n)\|\nabla u_1\|^2\bigl )\\
&\leq  a^4\biggl\{na^4+4(n+4)\Gamma_1
+4a^2(n+2)\|\nabla u_1\|^2+16\int_{\Omega}\sum_{m=1}^n(\dfrac{\partial^2 u_1}{\partial x_m^2})^2dv\biggl\}\\
&+16a^2\biggl\{a^4\|\nabla u_1\|^2+2a^2\Gamma_1+\int_{\Omega}\sum_{m=1}^n(\Delta(\dfrac{\partial u_1}{\partial x_m}))^2dv\biggl\}\\
&+n(\Gamma_{k+1}-\Gamma_1)(\Gamma_{k+2}-\Gamma_1),
\end{aligned}
\end{equation}
that is,
\begin{equation*}
\begin{aligned}
&\bigl\{(\Gamma_{k+1}-\Gamma_1)+(\Gamma_{k+2}-\Gamma_1)\bigl\}(na^2
+2(n+2)\|\nabla u_1\|^2 )\\
&\leq  a^2\biggl\{na^4
+4a^2(n+6)\|\nabla u_1\|^2+4(n+12)\Gamma_1+16\int_{\Omega}\sum_{m=1}^n(\dfrac{\partial^2 u_1}{\partial x_m^2})^2dv\biggl\}\\
&+16\int_{\Omega}\sum_{m=1}^n(\Delta(\dfrac{\partial u_1}{\partial x_m}))^2dv
+\dfrac{n}{a^2}(\Gamma_{k+1}-\Gamma_1)(\Gamma_{k+2}-\Gamma_1).\\
\end{aligned}
\end{equation*}

Thus,  we obtain
\begin{equation*}
\begin{aligned}
&\bigl\{(\Gamma_{k+1}-\Gamma_1)+(\Gamma_{k+2}-\Gamma_1)\bigl\}\\
&\leq
a^2( a^2+2\frac{n+2}{n}\|\nabla u_1\|^2)
+\dfrac{(\Gamma_{k+1}-\Gamma_1)(\Gamma_{k+2}-\Gamma_1)}{a^2(a^2+2\frac{n+2}{n}\|\nabla u_1\|^2 )}\\
&+\dfrac{16a^4\|\nabla u_1\|^2}{n(a^2+2\frac{n+2}{n}\|\nabla u_1\|^2 )}\\
&+\dfrac{16\int_{\Omega}|\nabla\Delta u_1|^2
+\biggl(4(n+12)\Gamma_1
+16\int_{\Omega}\sum_{m=1}^n(\dfrac{\partial^2 u_1}{\partial x_m^2})^2dv\biggl)a^2}{na^2+2(n+2)\|\nabla u_1\|^2 }.\\
\end{aligned}
\end{equation*}
For $k_1\geq 0$, $k_2>0$ and $k_3>0$, the function $f(t)=\dfrac{k_1+tk_2}{nt+k_3}$, for $ t\geq 0$, satisfies
$$
f(t)\leq \max\{\dfrac{k_1}{k_3}, \ \dfrac{k_2}n\}.
$$
Thus, we have
\begin{equation*}
\begin{aligned}
&\dfrac{16\int_{\Omega}|\nabla\Delta u_1|^2
+\biggl(4(n+12)\Gamma_1
+16\int_{\Omega}\sum_{m=1}^n(\dfrac{\partial^2 u_1}{\partial x_m^2})^2dv\biggl)a^2}{na^2+2(n+2)\|\nabla u_1\|^2 }
\leq C,\\
\end{aligned}
\end{equation*}
where $C$ is given by
$$
C=\max\biggl\{\dfrac{8\int_{\Omega}|\nabla\Delta u_1|^2dv}{(n+2)\|\nabla u_1\|^2 },
\dfrac{4(n+12)\Gamma_1+16\int_{\Omega}\sum_{m=1}^n(\dfrac{\partial^2 u_1}{\partial x_m^2})^2dv}{n}\biggl\}.
$$
If we put
$$
a^2(a^2+2\dfrac{n+2}n\|\nabla u_1\|^2 )=\sqrt{(\Gamma_{k+1}-\Gamma_1)(\Gamma_{k+2}-\Gamma_1)},
$$
we obtain
$$
a^4\leq \sqrt{(\Gamma_{k+1}-\Gamma_1)(\Gamma_{k+2}-\Gamma_1)},
$$
\begin{equation*}
\begin{aligned}
\bigl(\sqrt{\Gamma_{k+2}-\Gamma_1}-\sqrt{\Gamma_{k+1}-\Gamma_1}\bigl)^2\leq \dfrac{16\sqrt{\Gamma_1}}{n}\biggl\{(\Gamma_{k+1}-\Gamma_1)(\Gamma_{k+2}-\Gamma_1)\biggl\}^{\frac14}+C,
\end{aligned}
\end{equation*}
because
\begin{equation*}
\begin{aligned}
\|\nabla u_1\|^2\leq \sqrt{\Gamma_1}.
\end{aligned}
\end{equation*}
If we change $k+2$ and $k+1$ into $k+1$ and $k$, respectively, we know that  the theorem 1.1 is proved.

\section{Appendix}

\noindent
In this Appendix, we shall give a proof of the lemma 2.1.

\vskip2mm
\noindent
{\bf Lemma 2.1.}
Let $\{\mu_j\}_{j=k+1}^{\infty}$ be a sequence satisfying
$$
0\leq \mu_{k+1}\leq \mu_{k+2}\leq \cdots \to \infty.
$$
If  a sequence $\{a_j\}_{j=k+1}^{\infty}$ satisfies $\sum_{j=k+1}^{\infty}\mu_j^2a_j^2=A<\infty $ and
$\sum_{j=k+1}^{\infty}a_j^2=B<\infty $, then we have
$$
\sum_{j=k+1}^{\infty}\mu_ja_j^2\leq \dfrac{A+\mu_{k+1}\mu_{k+2}B}{\mu_{k+1}+\mu_{k+2}}.
$$

\vskip2mm
\begin{proof}
From the Cauchy-Schwarz inequality, we know
$$
\mu_{k+1}\sum_{j=k+1}^{\infty}a_j^2\leq \sum_{j=k+1}^{\infty}\mu_ja_j^2\leq
\sqrt{\sum_{j=k+1}^{\infty}\mu_j^2a_j^2\sum_{j=k+1}^{\infty}a_j^2}=\sqrt{AB}.
$$
Hence
$$
\mu_{k+1}\leq \sqrt{\dfrac{A}{B}}.
$$
For any sequence $\{x_j\}_{j=k+1}^{\infty}$ with $\sum_{j=k+1}^{\infty}\mu_j^2x_j^2=A $ and
$\sum_{j=k+1}^{\infty}x_j^2=B $, we consider the following function
$$
F(x_j)= \sum_{j=k+1}^{\infty}\mu_jx_j^2+\lambda(\sum_{j=k+1}^{\infty}\mu_j^2x_j^2-A) +\mu(\sum_{j=k+1}^{\infty}x_j^2-B),
$$
where $\lambda$ and $\mu$ are Lagrange multipliers. Thus, the maximum $f_{max}$ of the function $f= \sum_{j=k+1}^{\infty}\mu_jx_j^2 $
is attained  at critical points of $F$.  If  $\{c_j\}_{j=k+1}^{\infty}$ is a critical point of $F$, for any sequence $\{b_j\}_{j=k+1}^{\infty}$,
we have
$$
\dfrac{dF(c_j+tb_j)}{dt}\biggl|_{t=0}=
2\sum_{j=k+1}^{\infty}\mu_jc_jb_j+2\lambda\sum_{j=k+1}^{\infty}\mu_j^2c_jb_j +2\mu\sum_{j=k+1}^{\infty}c_jb_j=0.
$$
By taking
$$
b_j=\begin{cases} 1 &j=p,\\
                  0 & j\neq p,
                  \end{cases}
                   $$
we have
$$
(\mu_p+\lambda\mu_p^2+\mu)c_p=0.
$$
Since $\mu_p+\lambda\mu_p^2+\mu=0$ is a quadratic equation of $\mu_p$, if $\mu_p+\lambda\mu_p^2+\mu\neq 0$, we have
$c_p=0$. Let $\mu_r$ and $\mu_s$, $r<s$, be solutions of $\mu_p+\lambda\mu_p^2+\mu=0$ with multiplicity $r_0+1$ and $s_0+1$, respectively,
that is,
$$
\mu_r=\mu_{r+1}=\cdots=\mu_{r+r_0} \quad \mu_s=\mu_{s+1}=\cdots=\mu_{s+s_0}.
$$
Therefore, we have
\begin{equation}
\begin{aligned}
&A=\mu_r^2(c_r^2+c_{r+1}^2+\cdots+c_{r+r_0}^2)+\mu_s^2(c_s^2+c_{s+1}^2+\cdots+c_{s+s_0}^2),\\
&B=(c_r^2+c_{r+1}^2+\cdots+c_{r+r_0}^2)+(c_s^2+c_{s+1}^2+\cdots+c_{s+s_0}^2),\\
&f_{max}=\mu_r(c_r^2+c_{r+1}^2+\cdots+c_{r+r_0}^2)+\mu_s(c_s^2+c_{s+1}^2+\cdots+c_{s+s_0}^2)\\
\end{aligned}
\end{equation}
Hence, we get
$$
f_{max}=\dfrac{A+\mu_r\mu_sB}{\mu_r+\mu_s}.
$$
Since $f_{max}\leq \sqrt{AB}$ from the Cauchy-Schwarz inequality, we have
$$
f_{max}=\dfrac{A+\mu_r\mu_sB}{\mu_r+\mu_s}\leq \sqrt{AB}.
$$
Thus, we obtain
$$
(\sqrt{\dfrac AB}-\mu_r)(\sqrt{\frac AB}-\mu_s)\leq 0,
$$
that is, we have
$$
\sqrt{\dfrac AB}-\mu_r\geq 0, \quad \sqrt{\frac AB}-\mu_s\leq 0
$$
because of  $\mu_r\leq \mu_s$.
Since $\sqrt{\dfrac AB}-\mu_r\geq 0$, we know that $G(t)=\dfrac{A+\mu_rtB}{\mu_r+t}$ is a decreasing function of $t$.
Hence, we have
$$
f_{max}\leq \dfrac{A+\mu_r\mu_{k+2}B}{\mu_r+\mu_{k+2}}.
$$
If $\mu_{k+2}\geq \sqrt{\frac AB}$, we have $\mu_r=\mu_{k+1}$ because of $r<s$ and $\mu_r\leq \sqrt{\dfrac AB}$, that is
$$
f_{max}\leq \dfrac{A+\mu_{k+1}\mu_{k+2}B}{\mu_{k+1}+\mu_{k+2}}.
$$
If $\mu_{k+2}\leq \sqrt{\frac AB}$,  we know that $G(t)=\dfrac{A+\mu_{k+2}tB}{\mu_{k+2}+t}$ is a decreasing function of $t$.
Hence, we have
$$
f_{max}\leq \dfrac{A+\mu_r\mu_{k+2}B}{\mu_r+\mu_{k+2}}\leq \dfrac{A+\mu_{k+1}\mu_{k+2}B}{\mu_{k+1}+\mu_{k+2}}.
$$
It completes the proof of the lemma.

\end{proof}

\end{document}